\def\blfootnote{\xdef\@thefnmark{}\@footnotetext}
\numberwithin{equation}{section}
\numberwithin{figure}{section}
\theoremstyle{plain}
\newtheorem{thm}{\protect\theoremname}
  \theoremstyle{definition}
  \newtheorem{defn}{\protect\definitionname}
  \theoremstyle{plain}
  \newtheorem{prop}{\protect\propositionname}
  \theoremstyle{plain}
  \newtheorem{lem}{\protect\lemmaname}
  \theoremstyle{plain}
  \newtheorem{cor}{\protect\corollaryname}
  \providecommand{\definitionname}{Definition}
  \providecommand{\lemmaname}{Lemma}
  \providecommand{\propositionname}{Proposition}
  \providecommand{\corollaryname}{Corollary}
\providecommand{\theoremname}{Theorem}
\title{Hausdorff Dimension of the Record Set of a Fractional Brownian Motion}
\author[1]{Lucas Benigni}
\author[1]{Cl\'ement Cosco}
\author[1]{Assaf Shapira\thanks{Assaf Shapira acknowledges the support of the ERC Starting Grant 680275 MALIG}}
\author[2]{Kay J\"org Wiese}
\affil[1]{Laboratoire de Probabilit\'es et Mod\`eles Al\'eatoires, Universit\'e Paris Diderot}
\affil[2]{CNRS-Laboratoire de Physique Th\'eorique de l'Ecole Normale Sup\'erieure}
\newcommand{\Addresses}{{
  \bigskip
  \footnotesize
  
  Lucas Benigni, \textsc{Laboratoire de Probabilit\'es et Mod\`eles Al\'eatoires, Universit\'e Paris Diderot, 5 Rue Thomas Mann, 75013 Paris.}\par\nopagebreak
  \textit{E-mail address}:
\texttt{lucas.benigni@math.univ-paris-diderot.fr}

  \medskip

   Cl\'ement Cosco, \textsc{Laboratoire de Probabilit\'es et Mod\`eles Al\'eatoires, Universit\'e Paris Diderot, 5 Rue Thomas Mann, 75013 Paris.}\par\nopagebreak
  \textit{E-mail address}: \texttt{clement.cosco@math.univ-paris-diderot.fr}

  \medskip

  Assaf Shapira, \textsc{Laboratoire de Probabilit\'es et Mod\`eles Al\'eatoires, Universit\'e Paris Diderot, 5 Rue Thomas Mann, 75013 Paris.}\par\nopagebreak
  \textit{E-mail address}: \texttt{assafshap@gmail.com}

  \medskip

  Kay J\"org Wiese, \textsc{CNRS-Laboratoire de Physique Th\'eorique de l'Ecole Normale Sup\'erieure, PSL Research University, Sorbonne Universit\'es, UPMC, 24 rue Lhomond, 75005 Paris, France.}\par\nopagebreak
  \textit{E-mail address}: \texttt{wiese@lpt.ens.fr}

}}
\date{}
\begin{document}

\maketitle

\begin{abstract}
We prove that the Hausdorff dimension of the record set of a fractional
Brownian motion with Hurst parameter $H$ equals $H$.\blfootnote{\textit{AMS 2010 subject classifications}.	60G15, 60G17, 60G18, 28A78, 28A80.} \blfootnote{\textbf{Key words and phrases}. Fractional Brownian motion, record set, Hausdorff dimesion.}
\end{abstract}

\section{Introduction}
The statistics of records has been studied in both the physics and mathematics literature, see for example
\cite{majumdar2008universal,godreche2017record,feller1966introduction,glick1978breaking,wergen2011record,LeDoussalWiese2008a,arnold1998records}.
The record set (denoted Rec) of a random process $X_{t}$ is the
set of times $s$ at which $X_{s}=\max_{\left[0,s\right]}X_{t}$.
One of the most basic properties of this set is the number of records occurring during a certain
time interval. This problem has been well studied for discrete
processes such as sequences of i.i.d. random variables \cite{arnold1998records,glick1978breaking}
or random walks on $\mathbb{R}$ \cite{andersen1954fluctuations,feller1966introduction}.
However, when considering continuous processes (e.g., the Brownian motion) the question is ill defined.
Indeed, an interval will typically
contain either zero or infinitely many records. In these cases, a
natural way to quantify the size of the record set is to evaluate its Hausdorff
dimension. For the Brownian motion, it is shown in \cite{morters2010brownian} that this dimension is $\frac{1}{2}.$ 

The fractional Brownian motion (fBm) is a continuous Gaussian process $X_{t}$,
depending on a parameter $H\in\left(0,1\right)$ called the Hurst
index. It has expected value $0$ and covariances given by

\[
\left\langle X_{t}X_{s}\right\rangle =\frac{1}{2}\left(\left|t\right|^{2H}+\left|s\right|^{2H}-\left|t-s\right|^{2H}\right).
\]
The fBm is scale-invariant, namely $\left(a^{-H} X_{at}\right)_{t\ge0}$ has the same law as $\left(X_t\right)_{t\ge0}$ for all $a>0$. We emphasize that, even though in general this process could be defined also for $H=1$, we will only consider here $H$ strictly smaller than $1$.

The fractal properties of   fBm have been studied extensively (see
\cite{adler1981geometry,xiao2013fractal}). In this paper, we show
that the Hausdorff dimension of its record set is $H$.
\begin{figure}
\subfloat[]{\includegraphics[scale=0.42]{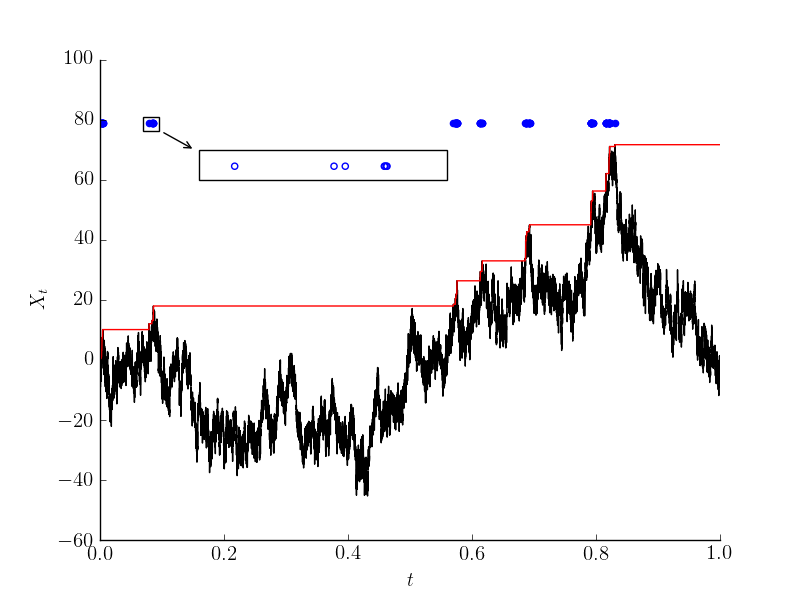}}
\subfloat[]{\includegraphics[scale=0.42]{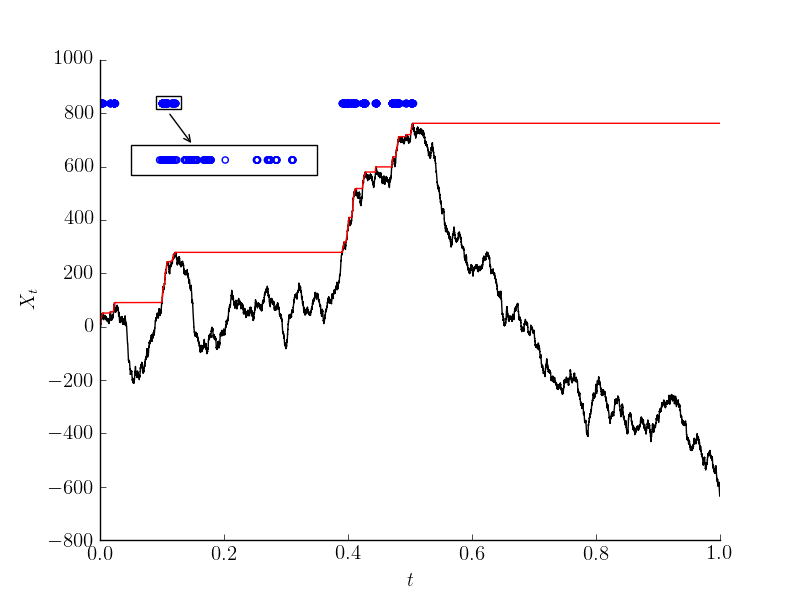}}
\caption{Two simulations of fBm, (a) with Hurst index $\frac{1}{3}$, and (b)
with index $\frac{2}{3}$. The fBm path is in black. In red, for every
time $t$, is the maximum up to time $t$. The blue points represent
the record set. Zooming in one of the blue clusters, one sees the
fractal nature of this set. For generating the fBm we used the algorithm of \cite{DiekerPhD}, for a system of size $2^{25}$.}
\label{fig:fBmswithmaxandrecs}
\end{figure}

\section{Heuristics}

To find the dimension of the record set, first fix a small $\varepsilon>0$
and divide the time interval $\left[0,1\right]$ in $N_{\varepsilon}=\frac{1}{\varepsilon}$
small boxes, each of diameter $\varepsilon$. We will be interested
in finding the number $M_{\varepsilon}$ of boxes in which a record
has occurred. To do so, we first compute the probability to
find a record during the time interval $\left[\left(n-1\right)\varepsilon,n\varepsilon\right]$.
Stated differently, this is the probability that the maximum of
$X_{t}$ in $\left[0,n\varepsilon\right]$ is attained in
the time interval $\left[\left(n-1\right)\varepsilon,n\varepsilon\right]$.
By time reversal symmetry this is the same as the probability
to attain the maximum of $X_t$ during $\left[0,n\varepsilon\right]$ in $\left[0,\varepsilon\right]$. Since the maximum
during the time interval $\left[0,\varepsilon\right]$ scales like $\varepsilon^{H}$,
following \cite{delorme2016perturbative}, we claim that this probability
is controlled by the probability that $\max_{[0,n\varepsilon]} X_{t}$ is of order
$\varepsilon^{H}$. That probability, as shown in \cite{aurzada2011one}, scales like
$\left(\varepsilon^{H}\right)^{\frac{1-H}{H}} = \varepsilon ^{1-H}$. Summing up the argument
so far, we get:
\begin{eqnarray*}
\mathbb{P}\left[\text{Rec}\cap\left[\left(n-1\right)\varepsilon,n\varepsilon\right]\neq\emptyset\right] & = & \mathbb{P}\left[\max_{\left[0,n\varepsilon\right]}X_{t}\text{ is attained during }\left[(n-1)\varepsilon,n\varepsilon\right]\right]\\
 & = & \mathbb{P}\left[\max_{\left[0,n\varepsilon\right]}X_{t}\text{ is attained during }\left[0,\varepsilon\right]\right]\\
 & \approx & \mathbb{P}\left[\max_{\left[0,n\varepsilon\right]}X_{t}\text{ is of order }\varepsilon^{H}\right]\\
 & \approx & \varepsilon^{1-H}.
\end{eqnarray*}
Thus, the expected number of boxes containing
a record scales as $M_{\varepsilon}\approx N_{\varepsilon}\varepsilon^{1-H}=\varepsilon^{-H}$,
suggesting a fractal dimension $H$ of the record set. This scaling
is verified numerically in figure \ref{fig:numerics}, as well as
in \cite{aliakbari2017records}.
\begin{figure}[!h]
\subfloat[\label{fig:H075}]{\hfill{}\includegraphics[scale=0.4]{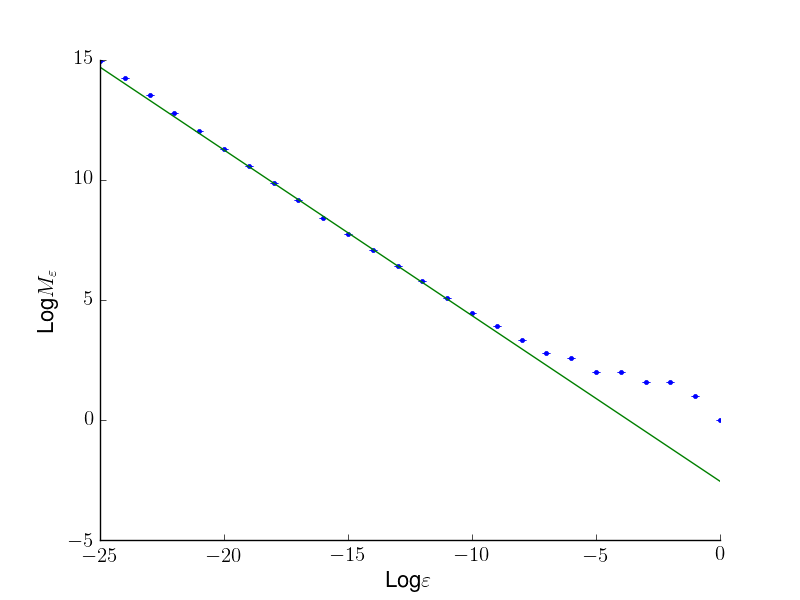}

}\subfloat[\label{fig:DimvsH}]{\hfill{}\includegraphics[scale=0.4]{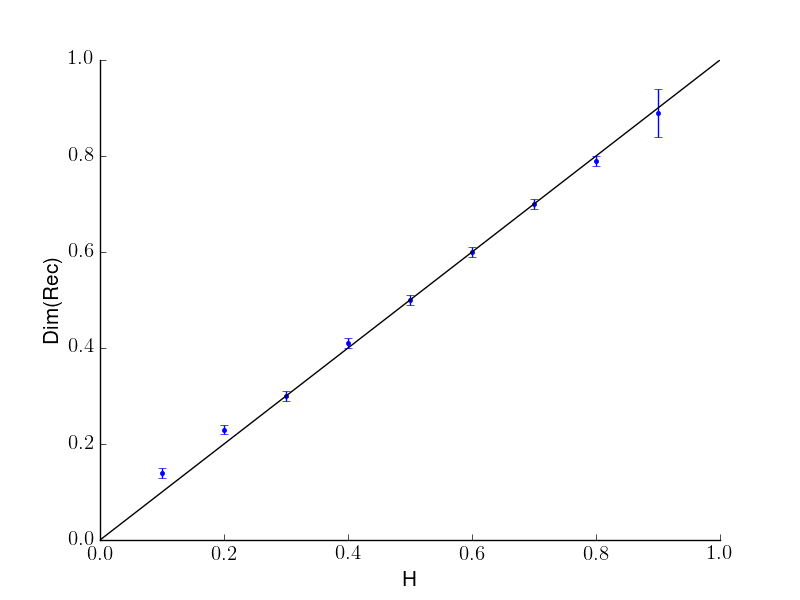}
}
\caption{\label{fig:numerics}In (a) we see a graph showing the relation between
$\log_{2}M_{\varepsilon}$ and $\log_{2}\varepsilon$ for a fBm sample
with $H=\frac{3}{4}$.  The relation is linear with slope $-0.690$
given by linear regression. In (b), we have extracted this slope for
different values of $H$ (shown in blue), averaging over 100 samples. The black line is  
Dim(Rec) $=H$. To generate an fBm we used the algorithm of \cite{DiekerPhD}, for a system of size $2^{25}$.}
\end{figure}

\section{Notation and Presentation of the Result}

We start by presenting some notations and definitions that will be used throughout the proof. In order to give the definition of the Hausdorff dimension we first define the $\alpha$-value of a covering:

\begin{defn}
Let $E$ a metric space, $\mathfrak{E}=\{E_1, E_2\dots\}$ a covering of $E$, and $\alpha\geqslant 0$. Then the $\alpha$\emph{-value} of $\mathfrak{E}$ is
$$\mathcal{S}_\alpha(\mathfrak{E}):=\sum_{i=1}^\infty\mathrm{Diam}(E_i)^\alpha.$$
\end{defn}
We can now define the Hausdorff dimension of a set.
\begin{defn}
Let $E$ a metric space, and for every $\alpha\geqslant 0$ consider the $\alpha$-\emph{Hausdorff measure} of $E$:
$$\mathcal{H}_\alpha(E):=\lim_{\delta \downarrow 0}\inf\left\{S_\alpha(\mathfrak{E}),\,\, \mathfrak{E}\text{ a covering of }E\text{ and }\mathrm{Diam}(E_i)\leqslant \delta\right\}.$$
Then the \emph{Hausdorff dimension} of $E$ is 
$$\mathrm{Dim}(E)=\inf\left\{\alpha\geqslant 0, \mathcal{H}_{\alpha}(E)<\infty\right\}=\sup\left\{\alpha\geqslant 0,\mathcal{H}_\alpha(E)=\infty\right\}.$$
\end{defn}
~\\
Recall   the definition of the record set
$$\mathrm{Rec}=\left\{t\geqslant 0, X_t=\max_{s\in[0,t]}X_s\right\}.$$
The main result we present here is:
\begin{thm} \label{main_theorem}
\begin{equation} \label{eq:dim_rec}
\mathrm{Dim}(\mathrm{Rec})=H \ a.s.
\end{equation}
\end{thm}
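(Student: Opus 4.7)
The plan is to prove the two matching bounds $\mathrm{Dim}(\mathrm{Rec}\cap[0,1])\le H$ and $\mathrm{Dim}(\mathrm{Rec}\cap[0,1])\ge H$ almost surely; countable stability of Hausdorff dimension together with the self-similarity of $X$ then gives the theorem.

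\emph{Upper bound.} I would turn the heuristic of Section~2 into a rigorous first-moment argument. The key step is the single-box estimate
\[
p_{\varepsilon}(n) := \mathbb{P}[\mathrm{Rec}\cap[(n-1)\varepsilon,n\varepsilon]\neq\emptyset] \lesssim n^{-(1-H)}.
\]
As in Section~2, time-reversal reduces $p_{\varepsilon}(n)$ to $\mathbb{P}[\max_{[0,n\varepsilon]} X \text{ is attained in } [0,\varepsilon]]$, and self-similarity to $\mathbb{P}[\max_{[0,1]} X \text{ is attained in } [0,1/n]]$. I would split on the size of $\max_{[0,1]} X$ and apply the one-sided exit asymptotics $\mathbb{P}[\max_{[0,1]} X \le a]\lesssim a^{(1-H)/H}$ from \cite{aurzada2011one} to conclude. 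Summing yields $\mathbb{E}[M_\varepsilon]\lesssim\varepsilon^{-H}$. Along the geometric sequence $\varepsilon_k=2^{-k}$, Markov combined with Borel--Cantelli gives $M_{\varepsilon_k}\le\varepsilon_k^{-H-\delta}$ for all large $k$, almost surely. The natural cover of $\mathrm{Rec}\cap[0,1]$ by the corresponding $M_{\varepsilon_k}$ boxes of diameter $\varepsilon_k$ then has $\alpha$-value at most $M_{\varepsilon_k}\,\varepsilon_k^{\alpha}\to 0$ for any $\alpha>H+\delta$, so $\mathcal{H}_{\alpha}(\mathrm{Rec}\cap[0,1])=0$.

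\emph{Lower bound.} For each $\alpha<H$ I would construct a random measure supported on $\mathrm{Rec}\cap[0,1]$ with positive mass and finite $\alpha$-energy, via the thickened record measure
\[
\mu_\varepsilon(dt) := \varepsilon^{H-1}\,\mathbf{1}\{M_t-X_t\le\varepsilon^H\}\,dt.
\]
The identity $M_t-X_t \overset{d}{=} \max_{[0,t]} X$ (time-reversal plus $X\overset{d}{=}-X$) combined with the exit asymptotics of \cite{aurzada2011one} gives the one-point estimate $\mathbb{P}[M_t-X_t\le\varepsilon^H]\asymp(\varepsilon/t)^{1-H}$, hence $\mathbb{E}[\mu_\varepsilon([0,1])]\asymp 1$. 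The core analytic step is a two-point bound of the form
\[
\mathbb{P}\!\left[M_s-X_s\le\varepsilon^H,\ M_t-X_t\le\varepsilon^H\right] \lesssim (\varepsilon/s)^{1-H}\,(\varepsilon/(t-s))^{1-H},\quad 0<s<t\le 1,
\]
which after substitution yields $\mathbb{E}[I_\alpha(\mu_\varepsilon)]\lesssim \iint s^{H-1}(t-s)^{H-1-\alpha}\,ds\,dt<\infty$ for $\alpha<H$, and similarly $\mathbb{E}[\mu_\varepsilon([0,1])^2]=O(1)$. Paley--Zygmund then gives $\mu_\varepsilon([0,1])$ bounded away from $0$ with uniformly positive probability; extracting a weak limit $\mu$, noting that $\mathrm{Rec}$ is closed and $M-X$ is continuous so that $\mathrm{supp}(\mu)\subset\mathrm{Rec}$, and applying Frostman's lemma, we conclude $\mathrm{Dim}(\mathrm{Rec}\cap[0,1])\ge\alpha$ with positive probability. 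A zero-one law from the self-similarity of the fBm upgrades this to almost sure; letting $\alpha\uparrow H$ closes the argument.

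\emph{Main obstacle.} The delicate point is the two-point estimate: the fBm has neither independent increments nor the strong Markov property, so one cannot decouple the event near $s$ from the event near $t$. My plan is to decompose $X_u = \mathbb{E}[X_u\mid(X_v)_{v\le s}] + \widetilde X_u$ for $u\ge s$, where $\widetilde X$ is a centered Gaussian process independent of $\mathcal{F}_s$, and then to argue that on the window $[s,t]$ the process $\widetilde X$, up to its conditional drift, behaves like an fBm on scale $t-s$, so that the one-point estimate applies to the shifted event. Controlling the bias that the conditioning $\{M_s-X_s\le\varepsilon^H\}$ introduces into the drift of $\widetilde X$ through the long-range covariance of the fBm is the technical heart of the argument; it is where the absence of Markovianity for $H\neq\tfrac12$ really bites.
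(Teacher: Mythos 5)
Your upper bound is essentially the paper's own argument (Proposition \ref{Prop6} plus Lemma \ref{lem:probofrec}): time reversal, self-similarity, a split on the size of the supremum, the one-sided exit bound of \cite{aurzada2011one} for the small-maximum piece and a Gaussian-tail bound for the large-maximum piece, followed by a first-moment covering estimate. One small imprecision: \cite{aurzada2011one} does not give the clean power $\mathbb{P}[\max_{[0,1]}X\le a]\lesssim a^{(1-H)/H}$, only $a^{(1-H-\delta)/H}$ for every $\delta>0$ (there are sub-polynomial corrections), so your single-box bound $p_\varepsilon(n)\lesssim n^{-(1-H)}$ should carry an extra $n^{\delta}$; since you in any case settle for $M_{\varepsilon_k}\le\varepsilon_k^{-H-\delta}$ before Borel--Cantelli, this loss is harmless and your upper bound goes through in the same way as the paper's.

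The lower bound is where there is a genuine gap. Your entire construction rests on the two-point estimate $\mathbb{P}[M_s-X_s\le\varepsilon^H,\ M_t-X_t\le\varepsilon^H]\lesssim(\varepsilon/s)^{1-H}(\varepsilon/(t-s))^{1-H}$, which you state but do not prove, and which is precisely the hard part: after conditioning on $\mathcal{F}_s$, the predicted drift of $X$ on $[s,t]$ fluctuates, because of the long-range dependence, on the same scale $\varepsilon^H$ (indeed on scale $(t-s)^H$) that the event near $t$ is trying to resolve, and no such decoupling bound is available from the references you invoke; even the one-point lower bound $\mathbb{P}[M_t-X_t\le\varepsilon^H]\gtrsim(\varepsilon/t)^{1-H}$, needed for $\mathbb{E}[\mu_\varepsilon([0,1])]\asymp 1$, is only known up to $\varepsilon^{\pm\delta}$ corrections (Molchan/Aurzada), which further complicates the Paley--Zygmund step. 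So as written this is a program, not a proof. The paper avoids all of this: the lower bound is deterministic. Lemma 4.21 of \cite{morters2010brownian} (Proposition \ref{Holder} here) states that for any $\alpha$-H\"older continuous function whose maximum is not attained at $0$, the record set has Hausdorff dimension at least $\alpha$; combined with the a.s.\ $\alpha$-H\"older continuity of fBm paths for every $\alpha<H$, this gives $\mathrm{Dim}(\mathrm{Rec})\ge H$ immediately, with no energy method, no two-point estimate, and no zero-one law (your appeal to a germ-field zero-one law for fBm is itself an unproven step). If you want to salvage your route, the two-point estimate would be the place to invest, but you should be aware that the simpler H\"older argument already settles the bound.
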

Finally, we will prove the following corollary, describing the scaling for the fBm equivalent of the third arcsine law (for results in the physics literature beyond the asymptotics see \cite{SadhuDelormeWiese2017}):

\begin{cor} \label{cor}

For all $\delta>0$ there exists $\varepsilon_0>0$, such that for all positive $\varepsilon<\varepsilon_0$,
\begin{equation} \label{eq:cor}
\varepsilon^{1-H+\delta}\le\mathbb{P}\left(\mathbf{Argmax}_{\left[0,1\right]}X_{t}\in\left[0,\varepsilon\right]\right)\le\varepsilon^{1-H-\delta}.
\end{equation}

\end{cor}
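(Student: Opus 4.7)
The plan is to prove the two inequalities in (\ref{eq:cor}) separately, using as the main analytic input the Molchan-type lower-tail estimate of \cite{aurzada2011one} (which we summarize as $\mathbb{P}[\max_{[0,1]} X_t \le a] = a^{(1-H)/H+o(1)}$ as $a\downarrow 0$) together with the scale invariance of fBm that was already exploited in the heuristic of Section 2. Throughout, write $p(\varepsilon)=\mathbb{P}[\mathbf{Argmax}_{[0,1]} X_t\in[0,\varepsilon]]$.

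For the upper bound, the key observation is that on the event $\{\mathbf{Argmax}_{[0,1]} X_t\in[0,\varepsilon]\}$ one has $\max_{[0,1]} X_t=\max_{[0,\varepsilon]} X_t$. Splitting according to whether this quantity is below or above $b\varepsilon^{H}$ for a parameter $b=b(\varepsilon)>0$, I would write
\[
p(\varepsilon)\le\mathbb{P}\bigl[\max_{[0,1]} X_t\le b\varepsilon^{H}\bigr]+\mathbb{P}\bigl[\max_{[0,\varepsilon]} X_t>b\varepsilon^{H}\bigr].
\]
The first term is controlled by $(b\varepsilon^{H})^{(1-H)/H-\delta/2}$ via \cite{aurzada2011one}, and after applying the distributional scaling $\max_{[0,\varepsilon]} X_t\stackrel{d}{=}\varepsilon^{H}\max_{[0,1]} X_t$ the second term equals $\mathbb{P}[\max_{[0,1]} X_t>b]$, which decays like $e^{-cb^{2}}$ by Borell--TIS. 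Taking $b=(\log \varepsilon^{-1})^{1/2}$ makes the Gaussian term negligible and absorbs the resulting logarithmic factors into the exponent, giving the bound $\varepsilon^{1-H-\delta}$.

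For the lower bound I would apply the rescaling $Y_s:=\varepsilon^{-H}X_{\varepsilon s}$, turning the problem into one for a fBm $Y$ on the long interval $[0,T]$ with $T=1/\varepsilon$. Writing
\[
p(\varepsilon)\ge\mathbb{P}\bigl[\max_{[0,1]} X_t\le\varepsilon^{H}\bigr]\cdot\mathbb{P}\bigl[\mathbf{Argmax}_{[0,T]} Y_s\in[0,1]\bigm|\max_{[0,T]} Y_s\le 1\bigr],
\]
the first factor is at least $\varepsilon^{1-H+\delta/2}$, again by \cite{aurzada2011one}. It remains to show that the conditional probability in the second factor stays uniformly positive as $T\to\infty$: intuitively, conditional on the atypical small-ball event $\{\max_{[0,T]} Y\le 1\}$, the process exhausts its growth within an $O(1)$ window, so the location of its global maximum must cluster near one of the endpoints of $[0,T]$.

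The main obstacle is precisely this last step. A naive use of time-reversal symmetry only localizes the argmax in $[0,1]\cup[T-1,T]$ under the small-ball conditioning, which would suffice, but rigorously extracting even that fact requires either a quantitative persistence refinement of \cite{aurzada2011one} or an appeal to Theorem \ref{main_theorem}. In the latter route one uses that by time-reversal and scaling $\mathbb{E}[M_\varepsilon]=\sum_{n=1}^{1/\varepsilon}p(1/n)$, combines the just-proved upper bound on $p$ with a lower bound $\mathbb{E}[M_\varepsilon]\ge\varepsilon^{-H+\delta/2}$ obtained in the course of proving $\mathrm{Dim}(\mathrm{Rec})\ge H$, and isolates, via a dyadic decomposition of the sum, enough mass in the last dyadic scale to conclude $p(\varepsilon)\ge\varepsilon^{1-H+\delta}$.
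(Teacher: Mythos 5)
Your upper bound is essentially the paper's own mechanism: the split according to whether $\max_{[0,1]}X_t$ exceeds a threshold, a small-ball bound from \cite{aurzada2011one} for the low branch and a Gaussian tail bound for the high branch, is exactly the $A_\varepsilon/B_\varepsilon$ decomposition in Lemma \ref{lem:probofrec} (the paper uses the threshold $\varepsilon^{H-\theta}$ and Piterbarg's estimate, you use $b\varepsilon^H$ and Borell--TIS). One quantitative slip: with $b=(\log\varepsilon^{-1})^{1/2}$ Borell--TIS only gives roughly $\varepsilon^{1/2}$ for the second term, which is \emph{not} $\le\varepsilon^{1-H-\delta}$ when $H<1/2$; you need $b=\lambda(\log\varepsilon^{-1})^{1/2}$ with $\lambda$ large depending on $H$, or simply $b=\varepsilon^{-\theta}$ as in the paper. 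This is fixable.

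The genuine gap is the lower bound. The conditional-localization route you describe first is, as you admit, not proved. The fallback via $\mathbb{E}[M_\varepsilon]=\sum_{n\le 1/\varepsilon}p(1/n)$ has two unproved steps. First, the estimate $\mathbb{E}[M_\varepsilon]\ge\varepsilon^{-H+\delta/2}$ is \emph{not} ``obtained in the course of proving $\mathrm{Dim}(\mathrm{Rec})\ge H$'': the paper's lower bound is Proposition \ref{Holder}, a deterministic statement about H\"older functions, and it produces no moment bound on box counts; to get one you would have to argue separately (e.g.\ a.s.\ Hausdorff dimension $\le$ lower box dimension, then pass from an a.s.\ eventual bound on $M_\varepsilon$ to a bound on $\mathbb{E}[M_\varepsilon]$ by dominated convergence), none of which appears in your sketch. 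Second, the dyadic step does not ``isolate mass in the last dyadic scale'': with the only available upper bound $p(2^{-j})\le C2^{-j(1-H-\delta')}$, the sum $\sum_j 2^jp(2^{-j})$ can be carried by scales anywhere in a window of length $c\log_2(1/\varepsilon)$ below the top, so the argument as stated only yields $p(\varepsilon_k)\ge\varepsilon_k^{1-H+\delta}$ along some sequence of scales, not for all $\varepsilon<\varepsilon_0$; repairing it needs the monotonicity of $\varepsilon\mapsto p(\varepsilon)$ plus exponent bookkeeping in which the loss is absorbed by shrinking $\delta$. The paper avoids all of this by running the argument in the opposite direction: assuming $\mathbb{P}[\mathrm{Rec}\cap[1-\varepsilon_k,1]\neq\emptyset]<\varepsilon_k^{1-H+\delta}$ along a sequence, time reversal and scaling give $\mathbb{P}[\mathrm{Rec}\cap[a,a+\varepsilon'_k]\neq\emptyset]\le b^{-1+H+\delta}(\varepsilon'_k)^{1-(H-\delta)}$ uniformly in $a\ge b$, and Proposition \ref{Prop6} --- which only requires a sequence of scales, precisely what the assumed failure supplies --- forces $\mathrm{Dim}(\mathrm{Rec})\le H-\delta$, contradicting Theorem \ref{main_theorem}. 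That contradiction scheme is what converts the dimension theorem into a bound valid for \emph{all} small $\varepsilon$; your proposal is missing an equivalent mechanism.
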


\section{Proof of the Result}
We will follow the proof from \cite{morters2010brownian}, in which the Hausdorff dimension of the record set is found for the (non-fractional) Brownian motion. The main difference comes from the non-Markovian behavior of the fBm, a difficulty that we will control with Lemma \ref{lem:probofrec} below.

First, we will get a lower bound on the record set dimension using Lemma 4.21 of \cite{morters2010brownian}:
\begin{prop}\label{Holder}
Let $f:\,[0,1]\rightarrow \mathbb{R}$, $\alpha$-H\"older continuous, whose maximum is not attained at $0$.
Then the Hausdorff dimension of its record set is greater or equal to $\alpha$.
\end{prop}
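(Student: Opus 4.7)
The plan is to transfer dimension from a large set (an interval) back into $\mathrm{Rec}$ through the running maximum, which is itself $\alpha$-Hölder.

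Let $M(t):=\max_{s\in[0,t]}f(s)$. I would start by establishing three elementary properties of $M$: it is continuous, non-decreasing, and $\alpha$-Hölder with the same constant as $f$. The Hölder bound is the only one that needs a short argument: if $s<t$ and $M(t)>M(s)$, pick $r^{\ast}\in(s,t]$ attaining $M(t)=f(r^{\ast})$; then, since $M(s)\ge f(s)$,
\[
0\le M(t)-M(s)\le f(r^{\ast})-f(s)\le C|r^{\ast}-s|^{\alpha}\le C|t-s|^{\alpha}.
\]

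Next, I would show that $M(\mathrm{Rec})$ contains the non-degenerate interval $(M(0),M(1)]$. Because the global maximum is not attained at $0$, we have $M(1)>M(0)$. For any $v\in(M(0),M(1)]$, set $t_{v}:=\inf\{t\ge0:M(t)\ge v\}$. By continuity of $M$ we obtain $M(t_{v})=v$, so that the supremum of $f$ on $[0,t_{v}]$ equals $v$ while the supremum on $[0,t_{v}-\delta]$ is strictly less than $v$ for every $\delta>0$. Continuity of $f$ then forces $f(t_{v})=v=M(t_{v})$, hence $t_{v}\in\mathrm{Rec}$. Consequently $v=M(t_{v})\in M(\mathrm{Rec})$, so $M(\mathrm{Rec})\supseteq(M(0),M(1)]$ and in particular $\mathrm{Dim}\bigl(M(\mathrm{Rec})\bigr)=1$.

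Finally I would invoke the standard fact that an $\alpha$-Hölder map $g$ on a metric space satisfies $\mathrm{Dim}(g(A))\le\alpha^{-1}\mathrm{Dim}(A)$ for every $A$ (which follows at once from the definition, since an $\varepsilon$-cover of $A$ yields a $C\varepsilon^{\alpha}$-cover of $g(A)$, so $\mathcal{S}_{\beta/\alpha}$ of the image is controlled by $\mathcal{S}_{\beta}$ of the preimage). Applied to $g=M$ and $A=\mathrm{Rec}$, this gives
\[
1=\mathrm{Dim}\bigl(M(\mathrm{Rec})\bigr)\le\frac{\mathrm{Dim}(\mathrm{Rec})}{\alpha},
\]
which is exactly the desired bound $\mathrm{Dim}(\mathrm{Rec})\ge\alpha$.

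There is no real obstacle here; the only delicate point is the verification that the first hitting time $t_{v}$ belongs to $\mathrm{Rec}$, which is where both the continuity of $f$ and the hypothesis that the maximum is not attained at $0$ (ensuring the target interval is non-degenerate) come into play.
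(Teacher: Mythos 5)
Your proof is correct and complete; the paper itself does not prove this proposition but simply invokes Lemma 4.21 of M\"orters--Peres, and your argument is essentially the standard one behind that lemma: the running maximum $M$ is $\alpha$-H\"older, it maps $\mathrm{Rec}$ onto the nondegenerate interval $(M(0),M(1)]$ (nondegenerate precisely because the maximum is not attained at $0$), and an $\alpha$-H\"older map can decrease Hausdorff dimension by at most a factor $1/\alpha$. The delicate points you flag --- that the first hitting time $t_v$ is a record time and that $M$ inherits the H\"older constant of $f$ --- are handled correctly, so nothing is missing.
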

For the other bound, we will use the following result, essentially proven in \cite{morters2010brownian}:
\begin{prop}\label{Prop6}
Let $A\subset [0,1]$ be a random set and $\vartheta>0$ such that for all $b>0$, there exists $C_b>0$  and a sequence of positive numbers $\varepsilon_k$, converging to zero and satisfying
\begin{equation}
\forall a\geqslant b,\,\,\,\mathds{P}\left[A\cap [a,a+\varepsilon_k]\neq \emptyset\right]\leqslant C_b\varepsilon_k^{1-\vartheta}.\label{eqassum}
\end{equation}
Then, almost surely,
\begin{equation}\label{eqres1}
\mathrm{Dim}(A)\leqslant \vartheta.
\end{equation}

\end{prop}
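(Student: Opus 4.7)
The plan is to use a standard first-moment / covering argument: cover $A\cap[b,1]$ by a collection of intervals of length $\varepsilon_k$, keep only those that actually meet $A$, and use the hypothesis \eqref{eqassum} to bound the expected $\alpha$-value of this random covering. Since $\varepsilon_k\downarrow 0$, this will give us an estimate on the Hausdorff measure itself, not just the pre-measure at a fixed scale.

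More concretely, fix $b>0$ and $\alpha>\vartheta$. For each $k$, partition $[b,1]$ into the intervals $I^{(k)}_j=[b+(j-1)\varepsilon_k,\,b+j\varepsilon_k]$ for $j=1,\dots,N_k$ with $N_k\le\varepsilon_k^{-1}$, and let $\mathfrak{E}_k$ be the sub-collection of those $I^{(k)}_j$ that intersect $A$. Then $\mathfrak{E}_k$ covers $A\cap[b,1]$, each element has diameter $\varepsilon_k$, and by \eqref{eqassum},
\begin{equation*}
\mathbb{E}\bigl[\mathcal{S}_\alpha(\mathfrak{E}_k)\bigr]
=\sum_{j=1}^{N_k}\varepsilon_k^{\alpha}\,\mathbb{P}\!\left[A\cap I^{(k)}_j\neq\emptyset\right]
\le \varepsilon_k^{-1}\cdot\varepsilon_k^{\alpha}\cdot C_b\varepsilon_k^{1-\vartheta}
= C_b\,\varepsilon_k^{\alpha-\vartheta}\xrightarrow[k\to\infty]{}0.
\end{equation*}

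Since $\mathcal{H}_\alpha(A\cap[b,1])\le\liminf_{k\to\infty}\mathcal{S}_\alpha(\mathfrak{E}_k)$ (the diameters $\varepsilon_k$ tend to $0$, so each $\mathfrak{E}_k$ is admissible in the definition of $\mathcal{H}_\alpha$ once $\varepsilon_k\le\delta$), Fatou's lemma yields $\mathbb{E}[\mathcal{H}_\alpha(A\cap[b,1])]=0$, hence $\mathcal{H}_\alpha(A\cap[b,1])=0$ almost surely. Taking a countable union over $b=1/n$ and over a sequence $\alpha_m\downarrow\vartheta$, we conclude that almost surely $\mathcal{H}_{\alpha_m}(A\cap(0,1])=0$ for every $m$, so $\mathrm{Dim}(A\cap(0,1])\le\vartheta$. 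The point $\{0\}$ has dimension $0\le\vartheta$, so \eqref{eqres1} follows.

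The only subtle point is the interchange between the deterministic definition of Hausdorff measure and the random covering: one must check that $\mathfrak{E}_k$ is measurable and that diameters $\varepsilon_k\to 0$ justify writing $\mathcal{H}_\alpha\le\liminf_k\mathcal{S}_\alpha(\mathfrak{E}_k)$ before taking expectation. Once this is in place, the rest is a direct first-moment computation and a standard countable-union argument to push $b\downarrow 0$ and $\alpha\downarrow\vartheta$; no property of the fBm is needed beyond \eqref{eqassum} itself, which is why the proposition can be stated for an abstract random set $A$.
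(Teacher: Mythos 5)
Your proof is correct and follows essentially the same route as the paper: cover $A\cap[b,1]$ by the length-$\varepsilon_k$ intervals that meet $A$, bound the expected $\alpha$-value via \eqref{eqassum}, apply Fatou, and finish by countable stability of the dimension. The only (harmless) difference is that you work at an exponent $\alpha>\vartheta$ and conclude $\mathcal{H}_\alpha(A\cap[b,1])=0$ before letting $\alpha\downarrow\vartheta$, whereas the paper works directly at $\vartheta$ and only needs $\liminf_k\mathcal{S}_\vartheta(\mathfrak{I}_k)<\infty$ almost surely.
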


For completeness, we present here the proof:
\begin{proof}
Let $b>0$, we will show that $\mathrm{Dim}(A\cap[b,1])\leqslant \vartheta$ using assumption \ref{eqassum}. The result will then follow by the countable stability of the Hausdorff dimension.\\
\indent In order to get an upper bound on the dimension, it is enough to find a family of coverings of $[b,1]\cap A$ with diameter going to zero such that the $\vartheta$-value of each covering is finite. To construct such a covering, consider, for $k\in\mathbb{N}$, 
$$N_k=\sup_{j\in\mathbb{N}}\{b+j\varepsilon_k\leqslant 1\}.$$ 

Denote, for $j=0\dots N_k-1,$ $I_j:=b+[j\varepsilon_k,(j+1)\varepsilon_k]$ and consider the collection of intervals:
$$\mathfrak{I}_k=\{I_j,\,I_j\cap A\ \neq \emptyset\}\cup \{[b+N_k\varepsilon_k,1]\}.$$
\indent Let $j\in\{0,\dots,N_k-1\}$. Taking $a=b+j\varepsilon_k$ in the assumption \ref{eqassum}, we have
\begin{equation}
\mathds{P}[A\cap I_j\neq\emptyset]\leqslant C_b\varepsilon_k^{1-\vartheta}.
\end{equation}  
Therefore the covering $\mathfrak{I}_k$ of $A\cap[b,1]$ has an expected $\vartheta$-value of
\begin{align*}
\mathds{E}\left[\mathcal{S}_\vartheta(\mathfrak{I}_k) \right]=\sum_{j=0}^{N_k-1}\mathds{P}[A\cap {I_j}\neq\emptyset]\varepsilon_k^{\vartheta}+\left(1-b-N_k\varepsilon_k\right)^{\vartheta} \leqslant 2C_b.
\end{align*}
Using Fatou's lemma, we have

\begin{equation}
\mathds{E}\left[\liminf_{k\rightarrow\infty}\mathcal{S}_\vartheta(\mathfrak{I}_k)\right]\leqslant \liminf_{k\rightarrow\infty}\mathds{E}\left[\mathcal{S}_\vartheta(\mathfrak{I}_k)\right]\leqslant 2C_b.
\end{equation}
Hence, the liminf is almost surely finite. In particular, there exists a family of coverings whose diameter is going to zero with bounded $\vartheta$-value, and we can conclude that almost surely
\begin{equation}
\mathrm{Dim}(A\cap[b,1])\leqslant \vartheta.
\end{equation}
\end{proof}

To use this proposition, we will need the following lemma:
\begin{lem} \label{lem:probofrec}
For all $\delta>0$ and $b>0$, there exists a constant $C(b,\delta,H)>0$, such that, for small enough $\varepsilon >0$,

\begin{equation} \forall a\geq b, \quad
\mathbb{P}\left[\mathrm{Rec}\cap\left[a,a+\varepsilon\right]\neq\emptyset\right]\le C\,\varepsilon^{1-H-\delta}.
\end{equation}
\end{lem}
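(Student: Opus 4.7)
The plan is to combine a time-reversal argument for fBm with the small-ball (persistence) estimate of \cite{aurzada2011one} and Gaussian concentration. First, note that $\mathrm{Rec}\cap[a,a+\varepsilon]\neq\emptyset$ if and only if the maximum of $X$ on $[0,T]$, where $T:=a+\varepsilon$, is attained in $[a,T]$: indeed, any record $s\in[a,a+\varepsilon]$ satisfies $X_s\ge\max_{[0,a]}X$ (so the global maximum on $[0,T]$ lies in $[a,T]$), while any global maximizer in $[a,T]$ is automatically a record. Next, a direct covariance computation (together with $X\stackrel{d}{=}-X$) shows that $\widetilde X_s:=X_{T-s}-X_T$ is itself an fBm on $[0,T]$, and the time reversal sends $\arg\max_{[0,T]}X\in[a,T]$ to $\arg\max_{[0,T]}\widetilde X\in[0,\varepsilon]$; hence
\begin{equation*}
\mathbb{P}\bigl[\mathrm{Rec}\cap[a,a+\varepsilon]\neq\emptyset\bigr]=\mathbb{P}\Bigl[\max_{[0,\varepsilon]}\widetilde X=\max_{[0,T]}\widetilde X\Bigr].
\end{equation*}

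To bound the right-hand side, I would introduce a threshold $M=K\varepsilon^{H}$ (with $K=K(\varepsilon)$ to be optimized) and use the elementary inclusion
\begin{equation*}
\Bigl\{\max_{[0,\varepsilon]}\widetilde X=\max_{[0,T]}\widetilde X\Bigr\}\subseteq\Bigl\{\max_{[0,T]}\widetilde X\le M\Bigr\}\cup\Bigl\{\max_{[0,\varepsilon]}\widetilde X\ge M\Bigr\},
\end{equation*}
valid because on the complement of the right-hand side $\max_{[0,T]}\widetilde X>M>\max_{[0,\varepsilon]}\widetilde X$. fBm scaling gives $\mathbb{P}[\max_{[0,T]}\widetilde X\le K\varepsilon^{H}]=\mathbb{P}[\max_{[0,1]}X\le K(\varepsilon/T)^{H}]$, and the persistence estimate of \cite{aurzada2011one} yields, for any $\delta'>0$ and small enough $v$, $\mathbb{P}[\max_{[0,1]}X\le v]\le c_{\delta'}v^{(1-H)/H-\delta'}$; together with $T\ge b$ this converts the first probability into a concrete power of $\varepsilon$. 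For the second, scaling gives $\mathbb{P}[\max_{[0,\varepsilon]}\widetilde X\ge K\varepsilon^{H}]=\mathbb{P}[\max_{[0,1]}X\ge K]$, which is bounded by $e^{-cK^{2}}$ via Borell--TIS applied to the Gaussian process $\widetilde X$ on $[0,1]$.

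Taking $K=\varepsilon^{-\eta}$ for small $\eta>0$, the first term becomes of order $\varepsilon^{1-H-\eta(1-H)/H-H\delta'}$ and the second decays faster than any polynomial in $\varepsilon$; choosing $\eta$ and $\delta'$ small enough that $\eta(1-H)/H+H\delta'<\delta$ gives the claim, with a constant depending only on $b$, $\delta$, and $H$. The conceptual obstacle is the non-Markovianity of fBm, which prevents a direct decoupling of $\max_{[0,\varepsilon]}\widetilde X$ from $\max_{[0,T]}\widetilde X$; the threshold inclusion above sidesteps this. The essential analytic input is the $(1-H)/H$ small-ball exponent for fBm, which is where the $1-H$ in the target bound ultimately originates, and the $\delta$-loss in the statement is the price paid for using this exponent only up to arbitrarily small polynomial corrections.
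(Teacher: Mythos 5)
Your proposal is correct and follows essentially the same route as the paper: time reversal to turn the record event into $\{\sup_{[0,\varepsilon]}=\sup_{[0,a+\varepsilon]}\}$, a threshold decomposition at level $\varepsilon^{H-\text{small}}$, the Aurzada small-ball exponent $(1-H)/H$ for the low part, and a super-polynomially decaying tail bound for the high part. The only difference is cosmetic: you invoke Borell--TIS for the tail where the paper cites Piterbarg's Theorem D.4, and either suffices since only faster-than-polynomial decay is needed there.
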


\begin{proof}
We begin by introducing two inequalities concerning the supremum of $X$:
\begin{enumerate}[label = (\roman*)]
\item For all $\delta'>0$, there exists a constant $M=M(\delta,H)>0$, such that, for small enough $u>0$:
\begin{equation} \label{eq:ineq1}
\mathbb{P} [ \sup_{0\leq t \leq 1} X_t \leq u] \leq  M \, u^{\frac{1-H-\delta}{H}} .
\end{equation} 
\item There exists a constant $M'=M'(H)>0$, such that, for large enough $v$, we have
\begin{equation} \label{eq:ineq2}
\mathbb{P} [ \sup_{0\leq t \leq 1} X_t > v] \leq M' v^{1/H} \Psi(v),
\end{equation}
where $\Psi(v) = \mathbb{P}(N>v)$ for $N$ a standard normal random variable. 
\end{enumerate}

The first inequality is a weak consequence of corollary 2 in \cite{aurzada2011one}. The statement of the second inequality can be found in Theorem D.4. of \cite{piterbarg2012asymptotic} (see also \cite{adler2009random}).

Let $\delta,$ $\varepsilon$ and $\theta$ be three positive real numbers, with $\theta$ satisfying $\theta <H$. By time reversal symmetry, the process $t \mapsto \tilde{X}_t = X_{a+\varepsilon-t} - X_{a+\varepsilon}$ is again a fractional Brownian motion starting at $0$ with Hurst index $H$. Hence,
\begin{align*}
\mathbb{P}\left[\mathrm{Rec}\cap [a,a+\varepsilon]\neq\emptyset\right] &= \mathbb{P}\left[\sup_{[0,a+\varepsilon]} \tilde{X}_t = \sup_{[a,a+\varepsilon]} \tilde{X}_t \right] = \mathbb{P}\left[\sup_{[0,a+\varepsilon]} {X}_t = \sup_{[0,\varepsilon]} {X}_t \right].
\end{align*}
Decomposing this last term into the two terms
\[ A_\varepsilon = \mathbb{P}\left[\sup_{[0,a+\varepsilon]} {X}_t \leq \varepsilon^{H-\theta}, \sup_{[0,a+\varepsilon]} {X}_t = \sup_{[0,\varepsilon]} {X}_t \right],\]
\[ B_\varepsilon = \mathbb{P}\left[\sup_{[0,a+\varepsilon]} {X}_t > \varepsilon^{H-\theta} , \sup_{[0,a+\varepsilon]} {X}_t = \sup_{[0,\varepsilon]} {X}_t \right],\]
and using the scaling invariance of $X$, we get that
\begin{align}\label{eq:Maj_A}
A_\varepsilon &\leq \mathbb{P}\left[ \sup_{[0,a+\varepsilon]} \frac{{X}_t}{(a+\varepsilon)^{H}}  \leq \frac{\varepsilon^{H-\theta}}{(a+\varepsilon)^{H}} \right] = \mathbb{P}\left[\sup_{[0,1]} {X}_t \leq \frac{\varepsilon^{H-\theta}}{(a+\varepsilon)^{H}} \right].
\end{align}
Therefore, for small enough $\varepsilon$, we can apply inequality \ref{eq:ineq1} with a positive parameter $\delta'<1-H$:
\begin{align*}
A_\varepsilon & \leq M \, \frac{\varepsilon^{1-H-\delta'}}{(a+\varepsilon)^{1-H-\delta'}} \, \varepsilon^{-\theta(1-H-\delta')/H} \leq C_1 \, \varepsilon^{1-H-\delta},
\end{align*}
where we now fix $\theta$ and $\delta'$ sufficiently small, chosen to verify
\[
\delta = -\delta' - \theta \frac{1-H-\delta'}{H},\]
and where, recalling that $b\leq a$, $C_1$ is defined as:
\[C_1(b,\delta,H) = M(\delta,H) \, b^{-1+H+\delta'}.\]

We then bound $B_\varepsilon$, using again the scaling invariance and applying \ref{eq:ineq2} for $\varepsilon$ small enough:
\begin{align*}
B_\varepsilon =  \mathbb{P}\left[\sup_{[0,\varepsilon]} {X}_t >  \varepsilon^{H-\theta}, \sup_{[0,a+\varepsilon]} {X}_t = \sup_{[0,\varepsilon]} {X}_t \right] &\leq \mathbb{P}\left[\sup_{[0,1]} {X}_t > \epsilon^{-\theta} \right]\\
& \leq M'(H) \, \varepsilon^{-\theta/H} \Psi(\varepsilon^{-\theta})\\
& \leq C_2 \, \varepsilon^{1-H-\delta},
\end{align*}
with $C_2=C_2(\delta,H)$ and where the last inequality is a consequence of the rapid decay of $\Psi$ as $\varepsilon$ tends to $0$. Summing the bounds over $A_\varepsilon$ and $B_\varepsilon$ concludes the proof of the lemma.
\end{proof}
Putting everything together, we are ready to prove Theorem \ref{main_theorem}.

\begin{proof}[Proof of Theorem \ref{main_theorem}.] We first prove the lower bound using Proposition \ref{Holder}. Indeed, the sample-paths of the fractional Brownian motion are almost surely $\alpha$-H\"older continuous for any $\alpha<H$ (see Theorem 3.1 of \cite{decreusefond1999stochastic}). Hence, we get that
$$\mathrm{Dim}(\mathrm{Rec})\geqslant \alpha$$
for any $\alpha<H$. The lower bound follows letting $\alpha$ go to $H$.\\
\indent In order to get the upper bound, we use Proposition \ref{Prop6} combined with Lemma \ref{lem:probofrec} to find:
$$\mathrm{Dim(Rec)}\leqslant H+\delta$$
for all $\delta>0$.
The upper bound follows letting $\delta$ go to zero. 
\end{proof}

We can now give a proof of Corollary \ref{cor}:
\begin{proof}
By the time reversibility property of the fractal Brownian motion, we can see that (cf. proof of Lemma \ref{lem:probofrec}):
\[
\mathbb{P}\left[\mathrm{Argmax}_{\left[0,1\right]}X_{t}\in\left[0,\varepsilon\right] \right]= \mathbb{P}\left[\mathrm{Rec}\cap [1-\varepsilon,1]\neq\emptyset\right].
\]

Therefore, the upper bound in the inequality \ref{eq:cor} is a direct consequence of Lemma \ref{lem:probofrec} (taking $\frac{\delta}{2}$ in order to absorb the constant $C$).

For the lower bound, we need to show that for all $\delta>0$, there exists $\varepsilon_0>0$ such that
\begin{equation} 
\forall \varepsilon < \varepsilon_0,\quad\varepsilon^{1-H+\delta}\le \mathbb{P}\left[\mathrm{Rec}\cap [1-\varepsilon,1]\neq\emptyset\right].
\end{equation}

Reasoning by contradiction, let $\delta >0$ and $(\varepsilon_k)_{k\geq 0}$ such that $\varepsilon_k \to 0$ and
\begin{equation} \label{eq:absurd_ineq}
\mathbb{P}\left[\mathrm{Rec}\cap [1-\varepsilon_k,1]\neq\emptyset\right]< \varepsilon_k^{1-H+\delta}.
\end{equation}
Let $b>0$, $a\geq b$, and $\varepsilon'_k$ to be chosen later on. Consider the rescaled process $t\to Y_t= \left({a+\varepsilon'_k}\right)^H X_{({a+\varepsilon'_k})t}$. By scaling invariance,  $Y$ is a fractional Brownian motion of Hurst index $H$ whose record set $\mathrm{Rec}(Y)$ on $[0,1]$ is the rescaled record set of $X$. Hence,
\begin{align*}
\mathbb{P}\left[\mathrm{Rec}(X)\cap [a,a+\varepsilon'_k]\neq\emptyset\right] &= \mathbb{P}\left[\mathrm{Rec}(Y)\cap [1 - \frac{\varepsilon'_k}{a+\varepsilon'_k},1]\neq\emptyset\right]\\
& \leq \mathbb{P}\left[\mathrm{Rec}\cap [1 - \frac{\varepsilon'_k}{b+\varepsilon'_k},1]\neq\emptyset\right].
\end{align*}
Choosing $\varepsilon'_k=\frac{b\,\varepsilon_k}{1-\varepsilon_k}$, so that $\frac{\varepsilon'_k}{b+\varepsilon'_k}=\varepsilon_k$, \ref{eq:absurd_ineq} yields:
\[
\mathbb{P}\left[\mathrm{Rec}\cap [a,a+\varepsilon'_k\right] \leq b^{-1+\delta+H} \, {\varepsilon'}_k^{1-(H-\delta)}.
\]
This is exactly the assumption of Proposition \ref{Prop6}, thus
\[\mathrm{Dim}(\mathrm{Rec}) \leq H-\delta,\]
in contradiction with Theorem \ref{main_theorem}.
\end{proof}

\section{Further Questions}
There are various topics for further research concerning the record statistics of continuous processes. For example, one may study the duration of the longest record or the waiting time for a first record to occur after some fixed positive time. It could also be interesting to study non-Gaussian or non-stationary processes. Another question would be to extend the study of records to fields of higher dimensions (both in space and in time), given an appropriate order on these spaces.

\section*{Acknowledgments}

We thank Mathieu Delorme for providing the python code generating the fBm samples used in the numerical verification. We also thank Francis Comets for careful reading and helpful comments.

\bibliographystyle{plain}
\bibliography{dim_of_records_in_fBm}

\Addresses

\end{document}